\newtheorem{theorem}{Theorem}[section]          
\newtheorem{lemma}[theorem]{Lemma}
\newtheorem{proposition}[theorem]{Proposition}
\newtheorem{corollary}[theorem]{Corollary}
\newtheorem{definition}[theorem]{Definition}
\newtheorem{remark}[theorem]{Remark}            
\numberwithin{equation}{section}
\def\VVV{\mathbb{V}}
\def\EEE{\mathbb{E}}
\def\N{{\bf N}}
\def\fd{/\!\!/}
\def\qed{\qquad\framebox[7pt]\medskip\noindent}
\newenvironment{proof}{{\sl Proof:}\quad}{\qed\\ \noindent}
\title{Harmonic evolutions on graphs}
\author{Jerzy Kocik\\ 
{\small Department of Mathematics}\\ 
{\small Southern Illinois University}\\
{\small Carbondale, IL 62901, USA}\\  
\emph{ \small jkocik@siu.edu} \\  }
\date{}
\begin{document} 

\maketitle

\begin{abstract}
We define the harmonic evolution of states of a graph by iterative
application of the harmonic operator (Laplacian over $Z_2$). 
This provides graphs with a new geometric context 
and leads to a new tool to analyze them. 
The digraphs of evolutions are analyzed and classified.
This construction can also be viewed as a certain topological
generalization of cellular automata.
\\
\\
MSC: 05C50, 
     15A33, 
     05C75, 
     05C85.  

\end{abstract}

\section{Introduction}

The Laplacian of a graph is a well known tool to study graphs.   
Typically, we retrieve its eigenvalues and deduce certain 
graph's properties \cite{AM, GR, Tut}. 
In this paper we rather analyze the graph's ``harmonic operator", 
the Laplacian modulo $\mathbf{Z}_2$, or --- more concretely --- the monoid it generates. 
Interestingly, 
the emerging algebraic properties may be interpreted pictorially and visualized.  
We define a cellular automaton on an arbitrary (simple) graph as follows.
\\
\\
{\bf The Harmonic Game (``game of light")}.  Choose a graph to represent your ``{\bf universe}".  
Each vertex may be in  one of the two states: ``on" or ``off" (excited or not excited).  
Any initial state of a graph will evolve stepwise according to certain ``{\bf laws}":

\begin{enumerate}
\item If a non-excited cell has an odd number of excited neighbors, it becomes excited.
\item If an excited cell has an odd number of non-excited neighbors, it will remain excited, 
otherwise it relaxes and goes to rest.
\end{enumerate}

Examples of evolutions on various graphs may be inspected in  Figures 1 and 2.
\\

%
\begin{figure}
\[
  \includegraphics[width=5in]{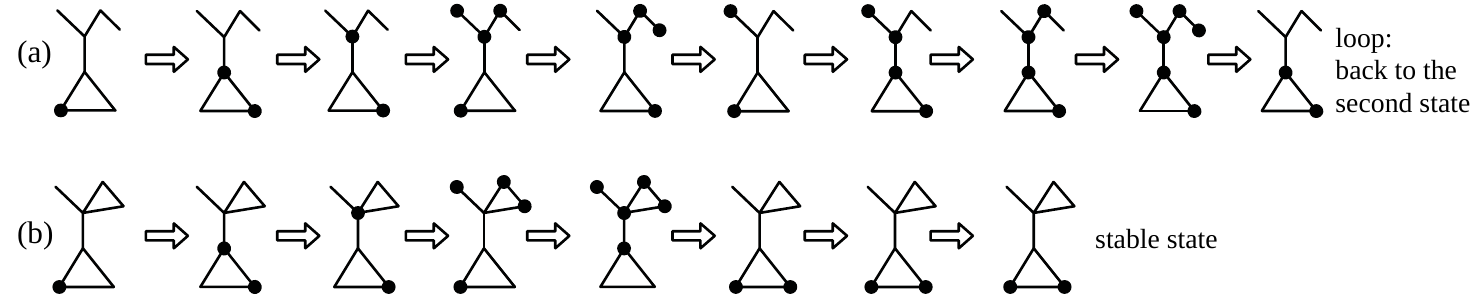}
\]
\caption{Examples of harmonic evolution on graphs (``game of light").}
\label{fig:f1}
\end{figure}
Clearly, each process has to eventually end up in a loop,   
consisting possibly of a single element, like Fig.\ 1b.  
Even small change in structure may radically change the size of the loop and of
the path towards the loop (cf. Fig. 1a and 1b).  
\\

The question is --- what are all possible evolutions on a given graph $G$ due 
to different initial states.  Laborious exercise would recover a digraph 
whose nodes are the states ($2^n$ of them, $n$=number of vertices in $G$), 
and whose directed edges indicate the consecutive states.  
See Figure 3  for an example.
It will be called the {\bf evolution digraph} of $G$ and denoted $G^*$. 
%
\begin{figure}[h]
\[
  \includegraphics[width=4in]{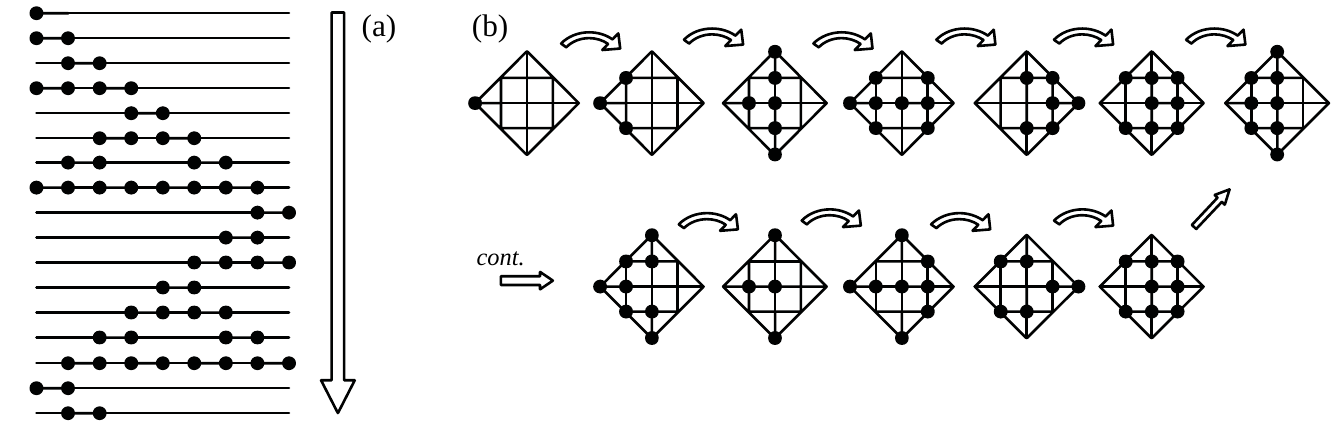}
\]
\caption{Game is like a propagation (a) a string with 9 nodes 
          (b) a diamond shaped graph.}
\label{fig:f2}
\end{figure}
%
It turns out that evolution digraphs have the same general structure: 
each $G^*$ consists of a certain number of closed loops,
and to every node in a loop a certain tree is attached, 
constituted by states descending towards it. 
Interestingly, every tree in $G^*$ has the same form.  
Other examples of graphs and their evolution digraphs are shown in Figure 5. 
\\

The structure of these trees and of the loops is concealed in the harmonic 
operator and its powers.  We will describe an algorithm that uncovers it. 
\\
%
\begin{figure}[h]
\[
  \includegraphics[width=4.4in]{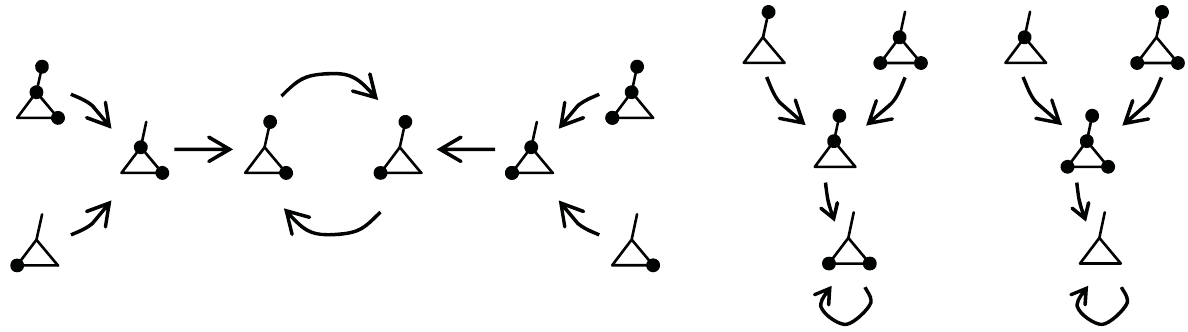}
\]
\caption{Evolution digraph $G^*$ of a graph $G$.}
\label{fig:diG}
\end{figure}

The harmonic game reminds one of Conway's ``game of life."  
But things are here more fundamental.  
Unlike the usual cellular automata, the rules are not arbitrary, but are 
derived from the topologically invariant notions, that of boundary and coboundary. 
They represent the action of the harmonic operator $a$. 
Moreover, the rules and the processes in Figures 1 and 2  
present only the surface of the actual dynamic, because both the vertices and the 
edges are actually involved in the process.  
\\

Motivations for studying Harmonic games (HG):
\\
\\
1. (Graph theory) A tool to analyze graphs in an alternative way 
to that of Laplacian's eigenvalues. The behavior of the states convey 
information on eigenspaces of powers of $a$.  In a sense the harmonic game is 
a method to illustrate the structure of eigenspaces of 
the Laplacian by trees, loops, etc.  
\\
\\
2. (CA) Although we see it as a graph-theoretic study, we 
view HG as a step beyond the regularity of grids used in cellular automata. 
And yet another difference: HG conceals actually two intertwined evolutions, 
one on vertices and one on edges, as explained in the next section 
(the rules presented here present only a surface of the dynamics). 
\\
\\
3. (Metaphor) A toy model of reality.  HG is a metaphor of a typical theory of 
theoretical physics.  
(The graph itself is like a {\it protogeometry}: locations and connections only). 
One is reminded of Wheeler's search for the ``Law without Law" \cite{Whe1} 
or his ``pre-geometry" \cite{Whe2}.   

~
\\
{\bf Remark:} If biologists have their mathematical toy in the ``game of life", 
HG may be viewed as a physicist's toy, a metaphor for ``light propagation":  
The rules for a one-step change defined by the harmonic operator are  
styled after the harmonic operator in differential geometry (and Maxwell's theory of light).  
Harmonic evolution has indeed the flavor of light ``propagation" (see Fig.\ \ref{fig:f2}). 
It actually obeys ``Huygens principle": the sum (mod 2) of two evolutions is an evolution. 
Note other features: the superposition principle, periodicity induced by boundary conditions, 
duality in propagation (like in electromagnetic wave), 
simplicity of laws, their topological foundations, etc.

\section{Harmonic Evolution}

Let $G=(\hbox{Ver\,}(G), \hbox{Edg\,}(G))$ be a simple graph 
(no loops, no multiple edges \cite{Tut}) with the set 
of vertices $\hbox{Ver}(G)$ and the set of edges $\hbox{Edg}(G)$.  
We define a {\it state} (respectively {\it co-state}\,)
as a subset of vertices (respectively edges) of $G$.
Denote the family of all subsets of $\hbox{Ver}(G)$ by $\VVV_G$,  
and that of $\hbox{Edg}(G)$ and by $\EEE_G$.
Define addition as the symmetric difference $A+B=: (A-B)\cup (B-A)$ 
for any two subsets $A$ and $B$ (``adding modulo two").
\\

The {\it boundary} operator $\partial:\VVV_G\to\EEE_G$ 
sends maps into co-states, namely
for any $S\in\VVV_G$, we define $\partial S$ as the sum of 
the sets of edges adjacent to vertices of $S$, added modulo two.
Similarly, a {\it co-boundary} operator 
$\delta:\EEE_G\to\VVV_G$ maps co-states into states, namely 
for $\sigma\in\EEE_G$ we define
$\delta\sigma$ as the set of vertices adjacent to the edges of 
$\sigma$, added modulo two (see Figure~\ref{fig:dd}).
%
%
\begin{figure}[h]
\[
  \includegraphics{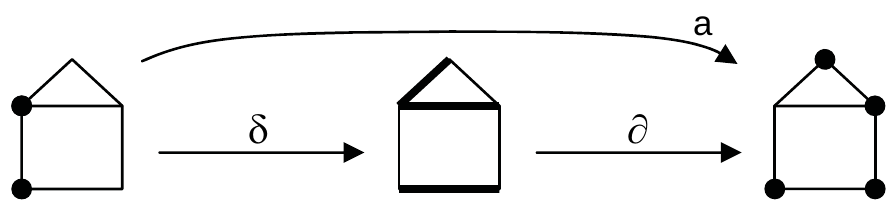}
\]
\caption{Boundary $\partial$, co-boundary $\delta$, 
         and harmonic map $a=\partial\delta$}
\label{fig:dd}
\end{figure}
%
%

By analogy to Hodge theory in differential geometry, 
the {\it harmonic map} is defined as the
composition $a=\partial\delta+\delta\partial$.
In particular, if restricted to states, the harmonic operator becomes
\begin{equation}
\label{eq:a}
                 a=\partial\delta\ \in\hbox{\rm End}\,\VVV_G
\end{equation}
By iterative application of the harmonic operator to an initial state 
$S\in\VVV_G$, we obtain a sequence $\{S,aS,a^2S,\ldots\}$ that we call 
the {\it evolution} of $S$, examples of which are 
in Fig. \ref{fig:f1} and \ref{fig:f2}.
\\

For a given graph $G$, all evolutions form an {\it evolution digraph}
$G^*$, the nodes of which are the states, $\VVV_G$, and the
oriented edges of which are of the form $(S,\;aS)$ (see Fig.\ \ref{fig:diG}).
The problem is to determine the evolution digraph of a given graph.
\\

%
\begin{figure}[h]
\[
  \includegraphics{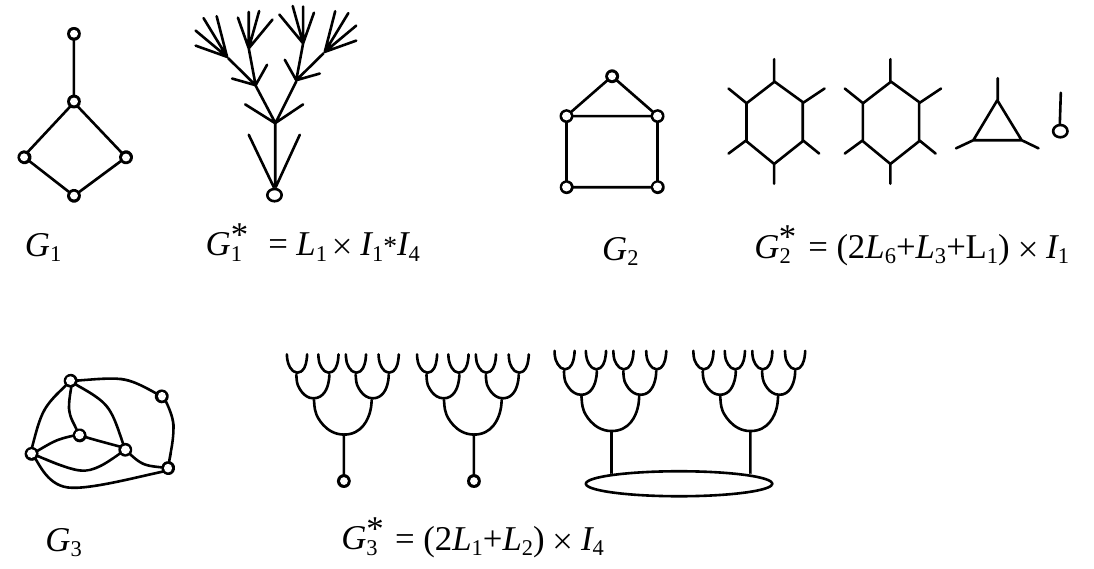}
\]
\caption{Three graphs and their evolution digraphs.}
\label{fig:threeG}
\end{figure}
%
%

It turns out that digraph $G^*$ consists of certain number of
cycles, (later collectively called the  {\it loop ensemble}), 
to each node of which attached is a copy of a descending tree  
(later called the {\it characteristic tree} of $G$).
Digraph $G^*$ is a source of invariants of $G$.
\\

Before we go on, inspect Figure \ref{fig:diG} and \ref{fig:threeG}
that present evolution digraphs for a few small graphs
(ignore for now the labels in Figure \ref{fig:threeG}).
Notice these features:
(i) the sum of nodes in the loops is a power of two;
(ii) the size of each loop divides the size of the longest loop;
(iii) each descending tree has the same structure;
(iv) the number of nodes in each tree is a power of two;
(v) trees have some regularity in shape. Thus the shape of the tree 
and of the loop part would suffice to reconstruct the digraph, as the arc 
orientations are unambiguous.

\section{Geometry of Graphs}

The following `geometrization of graphs' will be exploited.
The space of states $\VVV_G$ of a graph will be viewed as an
$n$-dimensional linear space over ${\bf Z}_2$, that is, $\VVV_G
\cong {\bf Z}_2^n $, where $n=|\hbox{Ver}(G)|$.
The structure of the graph is represented by two ${\bf Z}$-valued 
scalar products in the space $\VVV_G$. 
One is the natural scalar product $g$, which for the vectors representing 
single vertices is $g(v_i,v_j)=\delta_{ij}$. 
The other is a (possibly degenerated) scalar product $A$ represented 
in the natural basis by the adjacency matrix, that is 
$A(v_i,v_j) = 1$ if $(v_i,v_j)\in\hbox{Ver(G)}$, and $A(v_i,v_j) = 0$ otherwise.  
\\
\\
The harmonic operator (\ref{eq:a}) is geometrically an endomorphism of 
space $\VVV_G$, self-adjoint with respect to $g$.
One may easily show that in the natural basis of vertices, 
it has the matrix form  
$$
             a=A+D \ ,
$$
where $D$ is a diagonal matrix with values 1 at the diagonal 
entries that correspond to the odd vertices, and 0 otherwise. 
This is the Laplacian of the graph taken modulo 2.
As a matrix, $a$ has an even number of 1's in every row and every column.
Let $\Omega$ denote a column of all entries equal to one. Then actually 
any square matrix  with entries $0$ and $1$ only, 
satisfying (i) $a^T=a$ and (ii) $a\Omega = 0$, determines a graph. 

\bigskip

Evolution of a state $S\in\VVV$ is given by $S(t)=a^tS$ where $t\in{\bf N}$
plays the role of time. Thus, in order to study harmonic evolutions 
we need to start with the monoid of endomorphisms generated by $a$.  
Due to the finiteness of $G$, the monoid is of the form:
\[
       M(a)=\{1,\, a,\, a^2, \ldots , a^k, \dots, a^{k+n-1} \} \ ,
\]
where $a^{k+n}=a^k$ for some $k$ and $n$. 
Part $L(a)=:\{a^k,\ldots,a^{k+n-1}\}$ forms a cyclic
group, part $T(a)=:\{1,\ldots,a^k\}$ is called the {\it tail} of $M(a)$.
We shall use the notation $n=|L(a)|$ and $k=|T(a)|$.

\begin{lemma}
\label{le:1}
Operator $\pi  =a^p$ with $p=|L(a)|\cdot|T(a)|$ 
is an orthogonal projection in $\VVV$.
\end{lemma}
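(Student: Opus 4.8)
The plan is to verify the two defining properties of an orthogonal projection separately: that $\pi$ is \emph{self-adjoint} with respect to $g$, i.e.\ $\pi^T=\pi$, and that $\pi$ is \emph{idempotent}, $\pi^2=\pi$. The first property comes for free. Since $a$ is the Laplacian mod $2$, it is symmetric ($a^T=a$, equivalently $g$-self-adjoint, as noted just after (\ref{eq:a})), and every power inherits this: $(a^p)^T=(a^T)^p=a^p$. Hence $\pi=a^p$ is self-adjoint for \emph{any} exponent $p$, so no special choice of $p$ is required for this half of the statement.

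The whole content is therefore in idempotency, $a^{2p}=a^p$, and this is where the value $p=|L(a)|\cdot|T(a)|=nk$ enters. First I would promote the single periodicity relation $a^{k+n}=a^k$ to the statement that the sequence of powers is periodic with period $n$ once the exponent reaches the tail length: applying $a$ repeatedly gives $a^{m+n}=a^m$ for every $m\ge k$, and then, by induction on $j$, one gets $a^{m+jn}=a^m$ for all $j\ge 0$ and all $m\ge k$. In words: adding any multiple of the loop length $n$ to an exponent that is at least the tail length $k$ leaves the operator unchanged.

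With this in hand the choice $p=nk$ is made precisely so that the exponent lands inside the loop and is a multiple of the loop length. Indeed $p=nk\ge k$ (since $n\ge 1$), so $a^p$ already lies in the periodic regime, and the difference $2p-p=p=nk$ is a multiple of $n$. Writing $a^{2p}=a^{p+nk}$ and stripping off the $k$ copies of the period $n$ by the identity just established yields $a^{2p}=a^p$, that is $\pi^2=\pi$. Combined with the self-adjointness above, $\pi$ is an orthogonal projection on $\VVV$.

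I expect no serious obstacle; the only care needed is the bookkeeping on exponents, namely checking \emph{simultaneously} that $p\ge k$ (so that we sit inside the loop, where periodicity applies) and that $p\equiv 0 \pmod{n}$ (so that the extra factor of $a^p$ advances a whole number of times around the loop and returns to the same element). The product $nk$ is chosen because it is manifestly both $\ge k$ and divisible by $n$, which is exactly what these two requirements demand.
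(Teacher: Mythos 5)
Your proof is correct and follows essentially the same route as the paper: the paper identifies $a^{nk}$ as the neutral element of the cyclic group $L(a)$ (which is exactly your exponent bookkeeping showing $p\ge k$ and $n\mid p$, hence $a^{2p}=a^p$), and likewise derives orthogonality from the self-adjointness of $a$. Your version just makes explicit the periodicity argument the paper leaves as ``one may easily show.''
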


\begin{proof} 
One may easily show that $\pi$ is the neutral element of the group $L(a)$.
Therefore, $\pi^2=\pi$, i.e.,  $\pi$ is a projection.
Orthogonality of $\pi$ follows from self-adjointness of $a$.
\end{proof}

\begin{proposition}
\label{pr:2} 
The space of states $\VVV_G$ is a direct product of two subspaces
\begin{equation}
\label{eq:VTL}
         \VVV_G=T(G)\oplus L(G)
\end{equation}
where $T(G)={\rm Ker}\,\pi  $ and $L(G)= {\rm Im}\,\pi$.
Moreover, restriction of $a$ to $T(G)$ is nilpotent, $a^k=0$,
and restriction of $a$ to $L(G)$ is an automorphism of $\VVV$.
\end{proposition}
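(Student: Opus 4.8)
The plan is to read the statement as the Fitting-type decomposition of $\VVV$ induced by the idempotent $\pi$ supplied by Lemma~\ref{le:1}. Since $\pi^2=\pi$, every $v\in\VVV$ splits as $v=(v-\pi v)+\pi v$, where $\pi v\in{\rm Im}\,\pi$ and $v-\pi v\in{\rm Ker}\,\pi$ because $\pi(v-\pi v)=\pi v-\pi^2 v=0$. Moreover, if $w\in{\rm Ker}\,\pi\cap{\rm Im}\,\pi$, writing $w=\pi u$ gives $w=\pi u=\pi^2 u=\pi w=0$, so the two subspaces meet only in $0$. This already yields $\VVV=T(G)\oplus L(G)$ with $T(G)={\rm Ker}\,\pi$ and $L(G)={\rm Im}\,\pi$; none of it uses anything beyond idempotency.

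Next I would check that both summands are $a$-invariant, which is immediate from the fact that $a$ commutes with $\pi=a^p$ (both products equal $a^{p+1}$). Thus $\pi v=0$ forces $\pi(av)=a\pi v=0$, while $w\in{\rm Im}\,\pi$ forces $aw=a\pi u=\pi(au)\in{\rm Im}\,\pi$. Hence $a$ restricts to an endomorphism of each of $T(G)$ and $L(G)$.

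The substantive point is the \emph{sharp} nilpotency exponent: I must show not merely $a^p=0$ on $T(G)$ but $a^k=0$ with $k=|T(a)|$. The key is that the kernels stabilize exactly at the monoid index $k$. From the defining relation $a^{k+n}=a^k$ (with $n=|L(a)|\ge 1$), if $a^{k+1}v=0$ then $a^k v=a^{k+n}v=a^{n-1}(a^{k+1}v)=0$, so ${\rm Ker}\,a^{k+1}\subseteq{\rm Ker}\,a^k$; since the reverse inclusion is automatic, the kernel chain is stationary from $k$ on, giving ${\rm Ker}\,a^m={\rm Ker}\,a^k$ for every $m\ge k$. As $p=|L(a)|\cdot|T(a)|\ge k$, this identifies $T(G)={\rm Ker}\,\pi={\rm Ker}\,a^p={\rm Ker}\,a^k$, whence $a^k$ annihilates $T(G)$, i.e. the restriction of $a$ to $T(G)$ is nilpotent with $a^k=0$. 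I expect this kernel-stabilization step to be the only real obstacle; the remainder is bookkeeping.

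Finally, for $L(G)$ I would use that $\pi$ restricts to the identity on its own image, $\pi|_{{\rm Im}\,\pi}={\rm id}$ (since $\pi(\pi u)=\pi u$), together with the observation in Lemma~\ref{le:1} that $\pi$ is the neutral element of the cyclic group $L(a)$. Writing $\pi=a^p=a\cdot a^{p-1}=a^{p-1}\cdot a$ and restricting to $L(G)$, the operator $a^{p-1}|_{L(G)}$ is a two-sided inverse of $a|_{L(G)}$, so $a$ restricts to an automorphism of $L(G)$. This completes the decomposition and both structural claims.
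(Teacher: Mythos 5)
Your proof is correct. Note that the paper itself offers no proof of Proposition~\ref{pr:2} --- it is stated and immediately followed by the corollary on ${\rm dim}\,L(G)$, with the decomposition evidently regarded as a standard consequence of Lemma~\ref{le:1}. Your argument supplies exactly the missing content: the splitting $v=(v-\pi v)+\pi v$ and the triviality of ${\rm Ker}\,\pi\cap{\rm Im}\,\pi$ use only idempotency; $a$-invariance of both summands follows from $a\pi=\pi a=a^{p+1}$; and invertibility of $a$ on ${\rm Im}\,\pi$ via the two-sided inverse $a^{p-1}|_{L(G)}$ is the standard Fitting-type conclusion. The one step that genuinely needed an argument --- that the nilpotency exponent on $T(G)$ is the sharp value $k=|T(a)|$ rather than merely $p$ --- you handle correctly by showing ${\rm Ker}\,a^{k+1}\subseteq{\rm Ker}\,a^k$ from the relation $a^{k+n}=a^k$ and propagating stabilization of the kernel chain up to $p=nk\ge k$. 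This is precisely the right mechanism, and it is the part a careless reading would have skipped. No gaps.
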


Lemma \ref{le:1} implies:

\begin{corollary}
The dimension ${\rm dim}\, L(G)$ is even for any graph $G$.
\end{corollary}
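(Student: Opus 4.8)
The plan is to exhibit on the subspace $L(G)$ a \emph{symplectic} structure, i.e.\ a nondegenerate \emph{alternating} bilinear form (one with $B(x,x)=0$ for all $x$), and then invoke the elementary fact that such a form can live only on an even-dimensional space over any field, in particular over $\mathbf{Z}_2$. The natural candidate is simply the restriction $g|_{L}$ of the standard scalar product $g$ to $L(G)=\mathrm{Im}\,\pi$; the whole task is to verify that this restriction is simultaneously nondegenerate and alternating.

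First I would record the orthogonal decomposition coming out of Lemma \ref{le:1}. Since $\pi=a^p$ is a self-adjoint idempotent, for $x\in L(G)=\mathrm{Im}\,\pi$ (so $x=\pi x$) and $y\in T(G)=\mathrm{Ker}\,\pi$ (so $\pi y=0$) one gets $g(x,y)=g(\pi x,y)=g(x,\pi y)=0$, so $L(G)\perp T(G)$. Because $g$ is the nondegenerate identity form and $\dim L(G)+\dim T(G)=n$ by Proposition \ref{pr:2}, this forces $T(G)=L(G)^{\perp}$; hence the radical $L(G)\cap L(G)^{\perp}=L(G)\cap T(G)=0$, and $g|_{L}$ is nondegenerate.

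The crux is the alternating property, where the structural hypothesis $a\Omega=0$ does all the work. For any state $x$ one has $g(x,x)=\sum_i x_i^2=\sum_i x_i=g(\Omega,x)$ over $\mathbf{Z}_2$, using $x_i^2=x_i$. Now $a\Omega=0$ gives $\Omega\in\mathrm{Ker}\,a\subseteq\mathrm{Ker}\,a^{p}=\mathrm{Ker}\,\pi=T(G)=L(G)^{\perp}$. Therefore, for every $x\in L(G)$ we obtain $g(x,x)=g(\Omega,x)=0$, so $g|_{L}$ is alternating. Combining the two facts, $g|_{L}$ is a nondegenerate alternating form on $L(G)$, which is possible only when $\dim L(G)$ is even.

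I expect the main obstacle to be the nondegeneracy bookkeeping of the second paragraph rather than the alternating step: over $\mathbf{Z}_2$ a subspace can be isotropic, so one must argue carefully that orthogonality $L(G)\perp T(G)$ together with the \emph{direct}-sum splitting of Proposition \ref{pr:2} genuinely pins down $T(G)=L(G)^{\perp}$ and kills the radical of $g|_{L}$, instead of arguing by dimensions as loosely as one might over $\mathbf{R}$. The alternating step, by contrast, is a one-line consequence of $a\Omega=0$ and is the conceptual heart of why a graph Laplacian taken mod $2$ always yields an even-dimensional loop space $L(G)$.
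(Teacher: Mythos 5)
Your proof is correct, but it takes a genuinely different route from the paper's. The paper argues via the trace: since $\pi=a^p$ is a projection, $\dim L(G)\equiv\mathrm{Tr}\,\pi \pmod 2$, and $\mathrm{Tr}\,a^{i}$ is even for every $i$ (which itself comes from $a^i$ being symmetric with $a^i\Omega=0$, since then $0=\Omega^{T}a^{i}\Omega\equiv\mathrm{Tr}\,a^{i}\bmod 2$), so $\dim L(G)$ is even. You instead equip $L(G)$ with the restricted form $g|_{L}$, check it is nondegenerate (via $L\perp T$ from self-adjointness of the idempotent $\pi$, plus the splitting $\VVV_G=L\oplus T$) and alternating (via $g(x,x)=g(\Omega,x)$ over $\mathbf{Z}_2$ and $\Omega\in\mathrm{Ker}\,a\subseteq T=L^{\perp}$), and invoke the symplectic-basis theorem. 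Both arguments ultimately rest on the same two hypotheses, $a^{T}=a$ and $a\Omega=0$; the paper's version is shorter and purely computational, while yours buys genuine extra structure --- a canonical symplectic form on the loop space $L(G)$ and the observation that the all-ones state $\Omega$ always lies in the tree part $T(G)$ --- at the cost of the standard fact about alternating forms. A small streamlining: for the alternating step you do not need the full identification $T=L^{\perp}$; for $x=a^{p}x'\in L(G)$ one gets directly $g(\Omega,x)=g(a^{p}\Omega,x')=0$ by self-adjointness, which shortens your second paragraph to the single radical computation.
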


\begin{proof}
Indeed, projection $\pi  $ must have two eigenvalues:
namely 1 for the subspace $L(G)$, and 0 for $T(G)$.  Therefore,
a basis of $\VVV_G$ exists in which $\pi  $ is expressed by
a diagonal matrix \ $\pi  _0={\rm diag} [1,1,\ldots,0,0\ldots,0]$.
Trace does not depend on basis; therefore,
${\rm dim}\, L ={\rm Tr}\,\pi_0  ={\rm Tr}\,\pi$.
Since ${\rm Tr}\, a^i$ is even for any $i$, so is ${\rm dim}\, L$.
\end{proof}

Proposition \ref{pr:2} explains the general structure of evolution 
digraphs.  Here is its meaning.
\\

({\it i})
The $n$-cube $\VVV_G={\mathbf Z}_2^n$ may be considered as a digraph 
$(\VVV_G, \looparrowright)$,  where the ``looped arrow" denotes the 
relation of succession.  That is, for any two vectors (states) $x,y\in \VVV_G$
we say $x\looparrowright y$ only if $y=ax$. 
\\

({\it ii}) 
Subspace $T(G)\in \VVV_G$ is closed under (nilpotent) action of $a$.  
It may be, too, considered as a digraph 
$(T(G), \looparrowright)$,  with a similarly defined relation of succession.  
Due to nilpotency of $a$, this digraph has a form of a tree
with $0$ as the root. We shall call this digraph
the {\bf characteristic tree} of $G$.
\\

({\it iii})
Similarly, the subspace $L(G)\in \VVV_G$ is closed under 
action of $a$, but this time $a$  acts as an automorphism.  
Thus, as a digraph $(L(G), \looparrowright)$, 
it consists of a number of loops (cycles) arising as the orbits of
the action of the cyclic group generated by $a$.
We shall call this digraph the {\bf loop ensemble} of $G$.
\\
 
({\it iv})
Now, the evolution digraph $G^*$ results as a ``semidirect" 
product of the two,  
$$
G^* = L(G)\ltimes T(G)  = \{L(G)\times T(G), \looparrowright \}
$$
with the succession in $L\times T$ defined
$$
(x,y)\looparrowright (x',y') \hbox{\ if either  } 
(x=x' \hbox{\ and } y\looparrowright y')
\hbox{ \ or } 
(x\looparrowright x' \hbox{\ and } y=y'=0) \ .
$$
As a digraph, $G^* = T(G)\ltimes L(G)$ will be simply denoted 
as $G^* = \{T(G),L(G)\}$.  
\\

In the following two sections we shall characterize the two digraphs.

\section{Classification of Harmonic Trees.}

Let $A*B$ denote the digraph obtained as the direct product 
of two digraphs $A$ and $B$.  That is we set $(a,b)\prec(a',b')$ iff 
$a\prec b$ in $A$ and $b\prec b'$ in $B$.
\begin{theorem}
\label{th:3}
The characteristic tree of a graph can be uniquely 
factored into a product of binomial trees:
\begin{equation}
\label{eq:TII}
        T(G) = I^{b_1}_1 * I^{b_2}_2 * \ldots * I^{b_k}_k
\end{equation}
where $I_i$ denotes a binomial tree of height $i$.
\end{theorem}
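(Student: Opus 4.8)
The plan is to reduce the statement to the Jordan (cyclic) decomposition of the nilpotent operator $a|_{T(G)}$ and to exploit the fact that the digraph product $*$ corresponds exactly to a direct sum of $a$-invariant subspaces. First I would invoke Proposition \ref{pr:2}, by which $a$ restricted to $T(G)$ is nilpotent with $a^k = 0$. Over the field $\mathbf{Z}_2$ every nilpotent endomorphism has a basis of Jordan chains, so I can write $T(G) = C_1 \oplus C_2 \oplus \cdots \oplus C_m$ as a direct sum of $a$-invariant cyclic subspaces, where $C_j = \langle v_j, a v_j, \ldots, a^{d_j - 1} v_j\rangle$ with $a^{d_j} v_j = 0$. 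The multiset $\{d_1, \ldots, d_m\}$ is the partition of $\dim T(G)$ into Jordan block sizes.

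The key compatibility is the observation that a splitting into $a$-invariant subspaces induces the product $*$ on succession digraphs. Writing $x = (x_1, \ldots, x_m)$ in the decomposition, one has $ax = (a x_1, \ldots, a x_m)$, so $x \looparrowright x'$ holds precisely when $x_j \looparrowright x_j'$ in each factor, which is exactly the defining condition of $*$. Hence the characteristic tree factors as $T(G) = D(C_1) * \cdots * D(C_m)$, where $D(C_j)$ is the succession digraph of $a|_{C_j}$.

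Next I would identify each factor $D(C_j)$ with the binomial tree $I_{d_j}$. In the chain basis $C_j \cong \mathbf{Z}_2^{d_j}$ and $a$ acts as the shift $(c_1, \ldots, c_d) \mapsto (c_2, \ldots, c_d, 0)$; a short computation shows the distance of a state to the root $0$ equals the index of its last nonzero coordinate, so there are $2^{\ell-1}$ states at depth $\ell$, the root has a single child, and every other internal node has exactly two children. This is precisely $I_{d_j}$, the binomial tree of height $d_j$. Collecting equal block sizes and setting $b_i = \#\{j : d_j = i\}$ then yields $T(G) = I_1^{b_1} * \cdots * I_k^{b_k}$ with $k = \max_j d_j$.

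Finally, for uniqueness I would argue that the multiplicities $b_i$ can be read off the tree itself: the number of states annihilated by $a^\ell$ — that is, the nodes lying within distance $\ell$ of the root — equals $2^{f(\ell)}$ with $f(\ell) = \sum_j \min(\ell, d_j)$, so $f(\ell)$ is determined by $T(G)$ for every $\ell$, and the second finite difference $b_\ell = 2 f(\ell) - f(\ell-1) - f(\ell+1)$ recovers each multiplicity. Thus, although the cyclic summands $C_j$ are themselves non-canonical, their dimensions, and hence the factorization, are unique. I expect the main obstacle to be the bookkeeping of the third step — pinning down the exact shape of $I_{d_j}$ from the shift action and matching it to the paper's binomial tree — together with making precise that the non-uniqueness of the $C_j$ does not spoil the uniqueness of the factorization; the depth-profile invariants $f(\ell)$ are what resolve the latter.
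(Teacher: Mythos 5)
Your proposal is correct and follows essentially the same route as the paper: Jordan decomposition of the nilpotent operator $a|_{T(G)}$, identification of each cyclic block's succession digraph with a binomial tree (root with one child, interior nodes with two preimages since $\dim\ker a|_{C_j}=1$), and the observation that the direct sum of invariant subspaces corresponds to the $*$ product of digraphs. Your explicit uniqueness argument via the depth profile $f(\ell)=\dim\ker a^\ell$ and its second differences is a welcome addition the paper only supplies later (it is exactly the content of Theorem \ref{th:4}), but it does not change the approach.
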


\begin{proof}
Since  $a$ restricted to $T(G)$ is nilpotent, then ---by Jordan decomposition
theorem--- there exists a basis in $T(G)$ such that operator $a$ takes in it 
a quasi-diagonal form:
\begin{equation}
\label{eq:aJ}    
    [a]=\left[\begin{array}{ccccc}
                       J_1&   &      &    &        \cr
                          &J_2&      &    &        \cr
                          &   &\ddots&    &        \cr
                          &   &      &J_i &        \cr
                          &   &      &    &\ddots  \end{array}\right]
\qquad \hbox{where} \quad
    J_i=\left[\begin{array}{ccccc}
                         0&   &   &       &      \cr
                         1& 0 &   &       &      \cr
                          & 1 & 0 &       &      \cr
                          &   &   &\ddots &      \cr
                          &   &   &  1    & 0    \end{array}\right]
\end{equation}

(possibly, some $J_i$ are $1\times 1$ null matrices).
This means that the space $T(G)$ decomposes into eigenspaces of
the harmonic operator, on which map $a$  acts as a ``raising'' operator.
Namely, denote the vectors that span the subspace $E_i$ corresponding to the
sub-matrix $J_i$ by $\{f_1,\ldots, f_q\}$. Than $a$ acts :
\begin{equation}
\label{eq:ff}
           f_i \; \xrightarrow{\ a\ }\; f_{i+1}
           \qquad\hbox{\rm and}\qquad
           f_q \; \xrightarrow{\ a\ } \; 0
\end{equation}
Note that each $f_i$ (for $i>1$) is an image of exactly two states,
namely $f_{i-1}$ and $f_q + f_{i-1}$. So is, therefore, any sum of these
vectors. The state $f_1$, and any sum $\sum_{i\in I}\,f_i$ containing $f_1$,
do not have any anti-images through $a$.
Therefore, the states of the subspace $E_i$ form a binomial tree of
height $q$ with $2^q$ edges, among which half, $2^{q-1}$, form the top
(starting) nodes. Decomposition  of $a$ into linearly independent blocks (\ref{eq:aJ})
corresponds to multiplication (\ref{eq:TII}) of such trees.
\end{proof}

\begin{figure}
\[
  \includegraphics[width=4.5in]{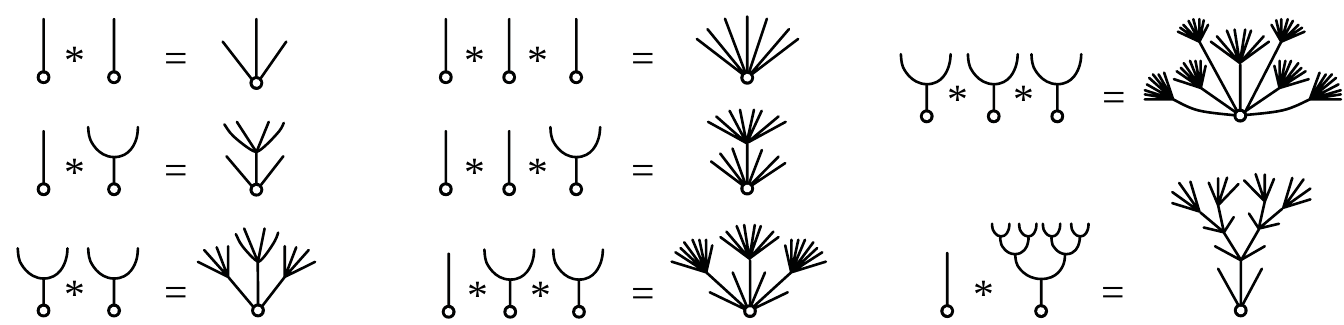}
\]
\caption{Products of binary trees: a fragment of multiplication table.}
\label{fig:TTT}
\end{figure}

A few products of binary trees are shown in Fig. \ref{fig:TTT}. Note that 
the height of the resulting tree is that of the highest tree in the product.
\\

The above theorem implies that
the number of the binomial trees in the decomposition (\ref{eq:TII}) 
equals ${\rm dim}\, {\rm Ker}\,a$.
Indeed, in the quasi-diagonal form (\ref{eq:aJ}) of the harmonic matrix, each
$J_i$ contributes a single one-dimensional subspace to ${\rm Ker}\,a$.
(Clearly ${\rm dim}\,{\rm Ker}\,a\,|_{T(G)} = {\rm dim}\,{\rm Ker}\,a$,
since ${\rm Ker}\, a\subset T(G)$).
Another implication is that if ${\rm dim}\, {\rm Ker}\, a=1$, then $T(G)$
is a binomial tree, $T(G)=I_{|T(a)|}$. If ${\rm dim}\,{\rm Ker}\, a=2$,
then  $T(G)=I_{|T(a)|}*I_{{\rm dim}\, T(G)-|T(a)|}$.
\\

The composition of the characteristic tree, that is the exponents of the 
factorization (\ref{eq:TII}), may be deduced by means of the elements 
of the monoid $M(a)$.

\begin{theorem}
\label{th:4} 
The multiplicity of the binomial tree $I_j$ in $T(G)$ is given by the 
following formula
\begin{equation}   
\label{eq:nKK}
    n_j =  2{\rm dim}\,{\rm Ker}\, a^j
           -{\rm dim}\,{\rm Ker}\, a^{j+1}
           -{\rm dim}\,{\rm Ker}\, a^{j-1}
                                    \qquad\hbox{for $j\leq k$}
\end{equation}
\end{theorem}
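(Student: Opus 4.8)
The plan is to reduce the statement to a standard fact about the Jordan structure of the nilpotent operator $a|_{T(G)}$ and then count. By the proof of Theorem \ref{th:3}, the factorization (\ref{eq:TII}) mirrors the Jordan decomposition (\ref{eq:aJ}): each Jordan block $J_i$ of size $q$ produces exactly one binomial factor $I_q$, and distinct blocks correspond to distinct factors under the product $*$. Hence the multiplicity $n_j$ of $I_j$ in $T(G)$ equals the number of Jordan blocks of $a|_{T(G)}$ of size exactly $j$, and it suffices to express that number through the kernel dimensions ${\rm dim}\,{\rm Ker}\,a^m$.

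First I would record a bookkeeping observation: because $a$ acts as an automorphism on $L(G)$ (Proposition \ref{pr:2}), every power $a^m$ is injective on $L(G)$, so ${\rm Ker}\,a^m \subset T(G)$ for all $m$. Therefore the kernel dimensions appearing in (\ref{eq:nKK}) may be computed indifferently on $\VVV_G$ or on $T(G)$; this is the extension to all $m$ of the remark already made for $m=1$.

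Next I would establish the counting lemma: the number of Jordan blocks of $a|_{T(G)}$ of size $\geq j$ equals ${\rm dim}\,{\rm Ker}\,a^{j} - {\rm dim}\,{\rm Ker}\,a^{j-1}$. The key computation is that a single block of size $s$ contributes $\min(s,m)$ to ${\rm dim}\,{\rm Ker}\,a^{m}$, since within such a block the raising action (\ref{eq:ff}) of $a^{m}$ annihilates exactly the top $\min(s,m)$ basis vectors. Summing this over all blocks and then differencing consecutive kernel dimensions kills the contribution of every block of size $\leq j-1$ and leaves a contribution of $1$ from each block of size $\geq j$, which gives the claimed count.

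Finally, the number of blocks of size exactly $j$ is obtained by differencing once more, namely
\[
 \left({\rm dim}\,{\rm Ker}\,a^{j} - {\rm dim}\,{\rm Ker}\,a^{j-1}\right)
 - \left({\rm dim}\,{\rm Ker}\,a^{j+1} - {\rm dim}\,{\rm Ker}\,a^{j}\right),
\]
which simplifies at once to (\ref{eq:nKK}). The only delicate point is handling the boundary indices --- $j=1$, where ${\rm Ker}\,a^{0}=\{0\}$, and $j=k$, where $a^{k}=0$ on $T(G)$ so that ${\rm Ker}\,a^{k+1}={\rm Ker}\,a^{k}=T(G)$ --- but in each case the telescoping differences stay valid and no separate argument is needed. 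The main obstacle is thus not conceptual but the careful accounting in the counting lemma; everything else is formal manipulation over ${\bf Z}_2$.
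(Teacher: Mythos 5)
Your proof is correct and follows essentially the same route as the paper: both rest on the identity $\dim\operatorname{Ker}a^{m}=\sum_{s}\min(s,m)\,n_{s}$ (the paper's system (\ref{eq:KKK})), and your two successive telescoping differences are exactly what the paper accomplishes by inverting the matrix $M_{ij}=\min(i,j)$, whose inverse is the discrete second-difference operator appearing in (\ref{eq:nKK}). Your explicit treatment of the boundary cases $j=1$ and $j=k$ is a small but welcome addition that the paper leaves implicit.
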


\begin{proof}
The dimensions of kernel of powers of $a$ satisfy the following system of equations:
\begin{equation}
\label{eq:KKK}
\begin{array}{ccl}
  {\rm dim}\, {\rm Ker}\, a^{\phantom{2}}
                              &=& n_1+n_2+\ldots+n_k             \cr
  {\rm dim}\, {\rm Ker}\, a^2 &=& n_1+2(n_2+\ldots+n_k)          \cr
  {\rm dim}\, {\rm Ker}\, a^3 &=& n_1+2n_2+3(n_3+n_4+\ldots+n_k) \cr
   \ldots       & &                                              \cr
  {\rm dim}\, {\rm Ker}\, a^k &=& n_1+2n_2+3n_3+\ldots+k n_k     
\end{array}
\end{equation}
Denote $K_i={\rm dim}\,{\rm Ker}\, a^i$. Then (\ref{eq:KKK}) is equivalent to
a matrix equation ${\bf K} = M {\bf n}$, where $M_{ij}=\min (i,j)$.
The inverse matrix $M^{-1}$, that is easy to find (here for k=5 for illustration):
\begin{equation*}
    K =\left[\begin{smallmatrix}
                         1& 1& 1& 1& 1 \cr
                         1& 2& 2& 2& 2 \cr
                         1& 2& 3& 3& 3 \cr
                         1& 2& 3& 4& 4 \cr
                         1& 2& 3& 4& 5 \end{smallmatrix}\right]
\qquad \Longrightarrow\quad
    K^{-1}=\left[\begin{smallmatrix}
                \hfill  2 &\hfill -1 &\hfill  0 &\hfill  0 & \ 0 \cr
                \hfill -1 &\hfill  2 &\hfill -1 &\hfill  0 & \ 0 \cr
                \hfill  0 &\hfill -1 &\hfill  2 &\hfill -1 & \ 0 \cr
                \hfill  0 &\hfill  0 &\hfill -1 &\hfill  2 & \ 0 \cr
                \hfill  0 &\hfill  0 &\hfill  0 &\hfill -1 & \ 2 \end{smallmatrix}\right] \ ,
\end{equation*}
solves (\ref{eq:KKK}) into (\ref{eq:nKK}), as claimed.
\end{proof}

This theorem gives the algorithm for retrieving the structure of the characteristic
tree from the harmonic matrix $a$ of a given graph.

\section{Harmonic Loops} 

Now we shall look for the structure of the loop ensemble of a given graph.
The elements of group $L(a)$ act --- if restricted to the subspace $L(G)\in\VVV_G$
 --- as automorphisms. The action of $L(a)$ decomposes the subspace $L(G)$ into a
number of orbits. Since the group $L(a)$ is cyclic, the orbits are loops.
(Clearly, the orbits do not form linear subspaces of $L(G)$).
It follows thus:

\begin{proposition}
\label{pr:5} 
The least common multiple of the lengths of
the loops in $L(G)$ is equal to the order of the loop group,
$m=|L(a)|$.
There is a loop in $L(G)$ of length $m$.
In particular, the rank of any vector of $L(G)$ divides $m$.
\end{proposition}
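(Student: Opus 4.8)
The plan is to transport the whole statement to the single automorphism $\alpha := a|_{L(G)}$ of the $\mathbf{Z}_2$-vector space $L(G)$, and to recall that the loops of the ensemble are exactly the orbits of the cyclic group $\langle\alpha\rangle$ acting on $L(G)$, so that the length of the loop through a vector $v$ is its \emph{period} $\mathrm{ord}(v)$, the least $d>0$ with $\alpha^d v=v$. The first thing I would pin down is that $\alpha$ has multiplicative order exactly $m=|L(a)|$. That its order divides $m$ is clear, since $\pi=a^{mk}$ acts as the identity on $L(G)=\mathrm{Im}\,\pi$ by Lemma~\ref{le:1}. For the reverse inequality I would use Proposition~\ref{pr:2}: if $\alpha^d=\mathrm{id}$ for some $d$ with $1\le d<m$, then $a^{k+d}$ and $a^k$ agree on $L(G)$, and also on $T(G)$ where both vanish because $a^k=0$ there; since $a$ preserves the splitting $\VVV_G=T(G)\oplus L(G)$, this forces $a^{k+d}=a^k$ on all of $\VVV_G$, contradicting minimality of the period $m$. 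Hence $\mathrm{ord}(\alpha)=m$.

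Given this, the two divisibility assertions are immediate. Since $\alpha^m=\mathrm{id}$, every period $\mathrm{ord}(v)$ divides $m$; this is exactly the claim that the rank of any vector of $L(G)$ divides $m$, and in particular every loop length divides $m$. Conversely, if $D$ denotes the least common multiple of all loop lengths, then $\alpha^D$ fixes every vector of $L(G)$, so $\mathrm{ord}(\alpha)=m$ divides $D$; combined with $D\mid m$ this gives $D=m$, which is the first claim of the proposition.

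The genuinely nontrivial point is the \emph{existence} of a single loop of length exactly $m$, and this is where I expect the main obstacle to lie: for an arbitrary action of a cyclic group of order $m$ the least common multiple of the orbit sizes equals $m$ without there necessarily being an orbit of that size (a $2$-cycle together with a disjoint $3$-cycle under $C_6$ is the standard warning). Here the rescue is linearity over $\mathbf{Z}_2$. I would invoke the standard fact from the theory of a single linear operator (rational canonical form) that there is a vector $v\in L(G)$ whose $\alpha$-annihilator equals the minimal polynomial $\mu$ of $\alpha$. Since $\alpha$ is invertible, $\mu(0)\ne 0$, so $x$ is a unit modulo $\mu$, and the period of such a $v$ is the least $d$ with $\mu\mid x^d-1$, i.e. the multiplicative order of $x$ in $\mathbf{Z}_2[x]/(\mu)$. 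That order is precisely $\mathrm{ord}(\alpha)=m$, so the orbit of $v$ is a loop of length $m$, completing the argument. If one prefers to avoid naming the minimal polynomial, the same vector can be assembled by primary decomposition: in each primary component pick a generator whose period equals the order of $x$ modulo the associated prime power, and add these; coprimality of the prime powers forces the period of the sum to be their least common multiple, which is again $m$.
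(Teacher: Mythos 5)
Your proof is correct, and it is worth noting that the paper itself offers essentially no proof here: Proposition~\ref{pr:5} is introduced with ``It follows thus'' after the observation that the orbits of the cyclic group $L(a)$ on $L(G)$ are loops. That observation alone yields only that every loop length divides $m$ and that $m$ equals the least common multiple of the loop lengths; it does \emph{not} by itself give the existence of a loop of length exactly $m$, and you are right to flag this as the genuinely nontrivial point (your $C_6$ acting as a disjoint $2$-cycle plus $3$-cycle is exactly the right warning). Your repair --- choosing a vector of $L(G)$ whose annihilator in $\mathbf{Z}_2[x]$ is the minimal polynomial $\mu$ of $\alpha=a|_{L(G)}$, so that its period is the multiplicative order of $x$ in $\mathbf{Z}_2[x]/(\mu)$, which equals $\mathrm{ord}(\alpha)=m$ --- is a clean use of the linearity that the paper's purely group-theoretic phrasing leaves implicit, and the primary-decomposition variant is equally valid. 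One small step needs tightening: the fact that $\pi=a^{mk}$ restricts to the identity on $L(G)$ shows only that $\mathrm{ord}(\alpha)$ divides $mk$, not that it divides $m$. The correct route is the one you use in the reverse direction anyway: from $a^{k+m}=a^k$ one gets $\alpha^{k+m}=\alpha^k$ on $L(G)$, and invertibility of $\alpha$ (Proposition~\ref{pr:2}) cancels $\alpha^k$ to give $\alpha^m=\mathrm{id}$. With that one-line fix the argument is complete and, if anything, more rigorous than what the paper provides.
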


Denote a loop of length $i$ by $L_i$.
If $nL_i$ denotes a disjoint sum of $n$ loops of order $i$,
then the decomposition of $L(G)$ into loops may be expressed
symbolically as a formal sum
\begin{equation}
\label{eq:LnL}
               L(G) = n_1L_1 + n_2L_2 + \ldots + n_mL_m
\end{equation}
(for examples this notation see Fig. \ref{fig:threeG})
Due to Proposition \ref{pr:5}, the only nonzero numbers $n_i$ could be 
those with $i$ dividing $m=|L(a)|$.  The total number of loops is $n=\sum n_i$.
Clearly all lengths of the loops sum up to a power of two,
namely $\sum_i {i\cdot n_i} = |L(G)|=2^{{\rm dim}\, L(G)}$. 
\\

Now we recover the exact structure of the loop ensemble.  That is,
we retrieve the coefficients of the decomposition (\ref{eq:LnL}), 
from the monoid $M(a)$.  
We shall denote  the eigenspace of an endomorphism $g$ by $F(g)$, and 
its dimension by ${\rm dim}\, F(g)$.
\begin{theorem}
\label{th:6}
The number of loops of length $p$ in $L(G)$ is
\begin{equation}
\label{eq:nmu}
     n_p = \frac{1}{p}\sum\limits_{i|p} 
                      \mu\left( {\frac p i} \right)
                   \;2^{{\rm dim}\, F(a^i)}
\end{equation}
where $\mu$ denotes Moebius function, and the sum runs over divisors of $p$.
\end{theorem}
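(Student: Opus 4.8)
The plan is to count, for each exponent $i$, how many states of $L(G)$ are left fixed by $a^i$, and then to disentangle the individual loop lengths by Möbius inversion. First I would recall that, since $a$ restricted to $L(G)$ is an automorphism and $L(a)$ is cyclic, the orbits of $\langle a\rangle$ acting on $L(G)$ are precisely the loops, and by Proposition~\ref{pr:5} every such loop has length dividing $m=|L(a)|$. A state $x\in L(G)$ satisfies $a^i x = x$ exactly when the length of the loop through $x$ divides $i$; equivalently, $x$ lies in the fixed subspace $F(a^i)=\ker(a^i-\mathrm{id})$, i.e.\ the eigenspace of $a^i$ for the eigenvalue $1$.

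The key simplification is that this fixed set is a linear subspace over $\mathbf{Z}_2$, so it contains exactly $2^{{\rm dim}\, F(a^i)}$ elements. I would also point out that one may compute this fixed space in all of $\VVV_G$ rather than just in $L(G)$: on $T(G)$ the operator $a$ is nilpotent, so $a^i x = x$ with $x\in T(G)$ forces $x=0$, whence every fixed point of $a^i$ already lies in $L(G)$. This makes the quantity $2^{{\rm dim}\, F(a^i)}$ appearing in the theorem well defined without reference to the decomposition.

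Next I would set up the counting identity. Each loop of length $d$ contributes exactly $d$ states, and all of them are fixed by $a^i$ if and only if $d\mid i$. Summing over all loops of $L(G)$ therefore gives
$$
   2^{{\rm dim}\, F(a^i)} \;=\; \sum_{d\mid i} d\, n_d \ .
$$
Finally I would apply Möbius inversion to this divisor sum, recovering $p\, n_p = \sum_{i\mid p}\mu(p/i)\,2^{{\rm dim}\, F(a^i)}$ and dividing by $p$ to obtain formula~(\ref{eq:nmu}).

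The Möbius inversion is entirely routine; the step most worth stating with care is the identification of the fixed set of $a^i$ with the disjoint union of those loops whose length divides $i$, together with the observation that, being the linear eigenspace $F(a^i)$, it has cardinality a power of two. I expect the only genuine subtlety to be pinning down $F(a^i)$ unambiguously as the eigenspace for eigenvalue $1$, and checking that it is the same whether computed in $L(G)$ or in $\VVV_G$ — which the nilpotency of $a$ on $T(G)$ guarantees.
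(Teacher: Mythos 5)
Your proposal is correct and follows essentially the same route as the paper: both rest on the identity $2^{{\rm dim}\, F(a^i)}=\sum_{d\mid i} d\,n_d$ (obtained by observing that the fixed points of $a^i$ in $L(G)$ are exactly the states lying on loops whose length divides $i$) followed by M\"obius inversion, which the paper carries out by explicitly inverting the matrix $A_{ij}=i\cdot\mu(i,j)$. Your added remark that nilpotency of $a$ on $T(G)$ lets one compute $F(a^i)$ in all of $\VVV_G$ is a small but worthwhile clarification that the paper leaves implicit.
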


\begin{proof}
First, notice that $F(a^i) \subset F(a^j)$ \ only if \ $i|j$.
Hence, for any $j$ we have
\begin{equation}
\label{eq:Fmu}
    |F(a^j)| = \sum_{d|j}^j d\cdot n_d
             = \sum_{d=1}^j d\cdot n_d\cdot \mu(d,j)
\end{equation}
where $\mu$ is the ``factorization function'' defined: 
$\mu(i,j)=1$ if $i|j$, and $\mu(i,j)=0$ otherwise.
For conciseness, denote   $F_i=|F(a^i)|= 2^{{\rm dim}\, F(a^i)}$
(this is the number of states in the eigenspace of $a^i$).
The system of equations (\ref{eq:Fmu}) has form ${\bf F} = A {\bf n}$, 
where $A$ is a lower-triangular matrix
\begin{equation}
\label{eq:AA}
  A = \left[\begin{array}{ccccccc}
                    1 &   &   &   &   &   &     \cr
                    1 & 2 &   &   &   &   &     \cr
                    1 & 0 & 3 &   &   &   &     \cr
                    1 & 2 & 0 & 4 &   &   &     \cr
                    1 & 0 & 0 & 0 & 5 &   &     \cr
                    1 & 2 & 3 & 0 & 0 & 6 &     \cr
                      &   &   &   &   &   &\ddots  \end{array}\right]
     \qquad\hbox{or}\qquad
     A_{ij} = i\cdot\mu(i,j)
\end{equation}
It is well-known that the inverse of matrix $M_{ij}=\mu(i,j)$ is given by
$(M^{-1})_{ij}= \mu({\frac i j})$, where $\mu$ is the Moebius function,
defined
$$
\mu(d) = \left\{\begin{array}{cl}
              1 &\hbox{\ if  } d=1 \cr
         (-1)^r &\hbox{\ if  } d=p_1\cdot p_2\cdot\ldots\cdot p_r\cr
           0    &\hbox{\ if  } p^2|d \hbox{\ for any prime } p 
                \end{array}\right.
$$
where $p_1, p_2,\ldots,p_r$ are distinct primes 
(for the properties of the Moebius function see \cite{Rot}).
Equation (\ref{eq:AA}) is a modified version of the factorization function, 
and it easily solves to (\ref{eq:nmu}).
\end{proof}

The above theorem provides an algorithm to construct the structure of 
the loop ensemble of the evolution digraph of a given graph.

\begin{figure}
\[
  \includegraphics[width=4.5in]{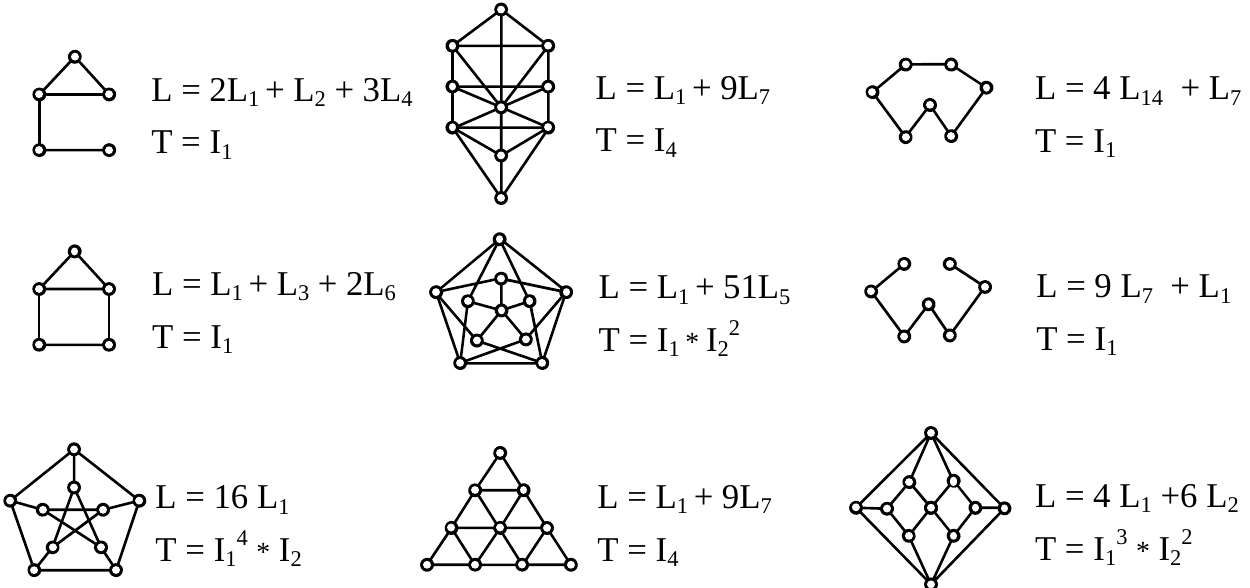} %
\]
\caption{Examples of graphs and their loop and tree structure.}
\label{fig:EEE}
\end{figure}

%
\begin{remark} \rm
Here is alternative view on the inversion (\ref{eq:nmu}) of the system (\ref{eq:Fmu}).
Let $\{\N,\prec\}$ be the poset (lattice) of natural numbers with
the order induced by multiplicative relations,
i.e. $a\prec b$ if $a|b$.
Let $[a]$ denote the sub-poset of all elements below $a$, i.e.
$[a]=\{n\in\N \;|\; n\prec a\}$  (a filter generated by $a$).
Let $\overline{a}$ be the set of maximal elements in $[a]-\{a\}$; 
say $\overline {a}=\{a_1,\ldots ,a_n\}$.
The subsets $[a_1]$, $[a_2]$,...,$[a_n]$ form  a ``$\cap$-algebra'', i.e.
their  mutual intersections are again some subposets
generated by elements of $[a]$.
\def\xxx{\!\!\!\diagdown\!\!\!\!\!\diagup\!\!\!}
\def\tiagup{\!\!\!\diagup\!\!\!}
\def\tiagdown{\!\!\!\diagdown\!\!\!}
$$
\begin{array}{ccccc}
        &          & 30 \\ 
        &\tiagup   &    &\tiagdown \\ 
     10 &          &    &          &15  \\     
     |  &\tiagdown &    &\tiagup   & |  \\
     2  &          & 5  &          &3   \\
        &\tiagdown & |  &\tiagup   &    \\
        &          & 1  &          &    
\end{array}
\qquad\quad
\begin{array}{ccccccc}
           &         &    &        & 60                  \\ 
           &         &    &\tiagup &  | &\tiagdown       \\
           &         & 30 &        &20  &           &12  \\ 
           &\tiagup  &  | &\xxx    & |  &\xxx       & |  \\
        15 &         & 10 &        & 6  &           & 4  \\ 
        |  &\xxx     &  | &\xxx    & |  &\tiagup         \\
        5  &         &  3 &        & 2                   \\ 
           &\tiagdown&  | &\tiagup                       \\
           &         &  1        
\end{array}
$$
The numbers of vertices in loops of order $k$ may be  obtained
by the inclusion-exclusion principle applied to
the system of subsets generated by $\overline k$.
For example: $n_{30}= F_{30}-F_{10}-F_{15}+F_{5}$\ and \
$n_{60}= F_{60}-F_{30}-F_{20}-F_{12}+F_{10}+F_{6}+F_{4}-F_{2}$.
\end{remark}


\section{Disjoint sum of graphs} 

The results of the previous sections may be formulated as follows.
On the one hand we have a category $\cal{G}$ of finite graphs. On the other 
we have the category of binary-generated trees $\cal{T}$ and the category
of loop ensembles $\cal{L}$:
\begin{equation}
\begin{array}{rl}
{\cal T} =& \{\;\prod\limits_i\;I^{n_i}_i\;|\; i,n_i\in{\bf N}\;\} \\
{\cal L}= &\{\;\sum\limits_i n_iL_i \;|\; i, n_i \in{\bf N}\;\}
\end{array}
\end{equation}
The construction described in this paper 
is a map
\begin{equation}
\label{eq:GLT}
                   *:\ {\cal G} \  \longrightarrow \ {\cal L}\times {\cal T}         
\end{equation}
which associates to each graph $G$ two elements, $L(G)\in{\cal L}$
and $T(G)\in{\cal T}$.
The digraph of evolution $G^*$ is a semidirect product of these two,
namely, if $\{L, \looparrowright \}$ and $\{T, \looparrowright \}$ are viewed as digraphs
$$
G^* = \{L\times T, \looparrowright \} = L\ltimes T
$$

The problem is to find how various operations and maps in
the category of graphs are reflected in the algebraic properties
of the spaces ${\cal L}$ and ${\cal T}$.
\\

From many such problems we answer a modest question concerning 
the evolution graph for a disjoint sum of two graphs, $G_1\cup G_2$.
Before we go on, a few remarks on the two categories: 
of trees and of loop ensembles.  

~\\
{\bf Universal family of binary generated trees.}
The family of trees 
${\cal T} := \hbox{gen\,} \{I_0,I_1,I_2,\ldots\}$
consists of all finite $*$-products of binomial trees.
It forms a semigroup $({\cal T}, *)$ satisfying:

\begin{equation}
\label{eq:TTTT}
\begin{array}{ll}
1) &T_1 * T_2 = T_2 * T_1\\
2) &(T_1 * T_2) * T_3 = T_1 * (T_2 * T_3) \\
3) &T * I_0 =  T   
\end{array}
\end{equation}
%
Thus $({\cal T}, * )$ forms a commutative free monoid
with exponentials from $\N$.
\\
\\
{\bf Universal loop space.} 
Similarly, let us introduce a ``space'' ${\cal L}$ over $\N$ spanned by the 
the set $\{L_1, L_2,\ldots\}$, where $L_i$ represents a loop length $i$.
Using additive notation, a collection of loops (loop ensemble) will be 
denoted as a formal sum, e.g. $L=L_3 + 2L_4=\{ \bigtriangleup,\square,\square\}$.
Now we shall turn ${\cal L}$ into an algebra by introducing a product 
$*:\;{\cal L}\times{\cal L}\to{\cal L}$,  
which for single loops $L_a$ and $L_b$ of length $a$ and $b$
respectively is defined
\begin{equation}
\label{eq:LL}
L_a * L_b = \langle a|b \rangle \ L_{[a|b]}   
\end{equation}
where we denote $\langle a|b\rangle =\hbox{gcd\,}(a,b)$ and $[a|b] =\hbox{lcm\,}(a,b)$.
\begin{theorem}
The triple $\{{\cal L}, +, * \}$ satisfies
$$
\begin{array}{llll}
1) &L_a * L_b = L_b * L_a                  &1')& L_a + L_b = L_b + L_a\\
2) &(L_a * L_b) * L_c = L_a * (L_b * L_c)  &2')& (L_a + L_b) + L_c = L_a + (L_b + L_c)\\
3) &L_1 * L =  L \ \ \hbox{\rm for any  } L   &3')& L_0 + L =  L \ \ \hbox{\rm for any  } L \\
4) &(L_a + L_b) * L_c = L_a * L_c + L_b * L_c   
\end{array}
$$
\end{theorem}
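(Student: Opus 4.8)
The plan is to reduce everything to the action on single generators $L_a$, since $*$ is extended to arbitrary ensembles by $\N$-bilinearity and $+$ is the free commutative monoid operation. Granting that reduction, the three additive axioms (1'), (2'), (3') are formal: $+$ is by construction the free commutative monoid on $\{L_1,L_2,\ldots\}$ with the empty ensemble $L_0$ as neutral element, so commutativity, associativity and the identity law hold by definition. Distributivity (4) is likewise built in: once $*$ is fixed on generators by (\ref{eq:LL}) and declared bilinear over $\N$, the rule $(L_a+L_b)*L_c = L_a*L_c + L_b*L_c$ is exactly the additivity of that extension in its first slot. For these I would only remark that they are formal bookkeeping.

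Multiplicative commutativity (1) and the unit law (3) reduce to elementary number theory on generators. Because $\langle a|b\rangle=\gcd(a,b)$ and $[a|b]=\mathrm{lcm}(a,b)$ are symmetric in $a,b$, formula (\ref{eq:LL}) immediately gives $L_a*L_b=L_b*L_a$; and since $\gcd(1,b)=1$ and $\mathrm{lcm}(1,b)=b$, it gives $L_1*L_b=L_b$ on generators, hence $L_1*L=L$ for every ensemble $L$ by bilinearity.

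The hard part will be associativity (2). Working on generators and pulling the scalar coefficients out by bilinearity, I would expand both sides to
$$(L_a*L_b)*L_c = \gcd(a,b)\,\gcd(\mathrm{lcm}(a,b),c)\; L_{\mathrm{lcm}(a,b,c)},$$
$$L_a*(L_b*L_c) = \gcd(b,c)\,\gcd(a,\mathrm{lcm}(b,c))\; L_{\mathrm{lcm}(a,b,c)}.$$
The loop index agrees on both sides because $\mathrm{lcm}$ is associative, so the whole question collapses to the coefficient identity
$$\gcd(a,b)\,\gcd(\mathrm{lcm}(a,b),c) = \gcd(b,c)\,\gcd(a,\mathrm{lcm}(b,c)).$$

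I would prove this prime by prime. Fixing a prime $p$ and writing $\alpha,\beta,\gamma$ for the $p$-adic valuations of $a,b,c$, and using that $\gcd$ and $\mathrm{lcm}$ act as $\min$ and $\max$ on valuations, the claim becomes
$$\min(\alpha,\beta)+\min(\max(\alpha,\beta),\gamma) = \min(\beta,\gamma)+\min(\alpha,\max(\beta,\gamma)).$$
The clean finish is to observe that each side equals the sum of the two smallest of $\alpha,\beta,\gamma$ (equivalently, $\alpha+\beta+\gamma$ minus the largest), a quantity manifestly symmetric in the three valuations; equality follows. Verifying the ``sum of the two smallest'' reading is a short case check over the orderings of $\alpha,\beta,\gamma$. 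Summing the valuation identity over all primes yields the coefficient identity, and with it associativity. This prime-localization step is the only place real content enters; everything else is formal.
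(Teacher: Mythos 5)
Your proof is correct, and for the one substantive item --- associativity of $*$ --- it takes a genuinely different route from the paper. The paper proves item 2) by a single telescoping computation: writing $\langle a|b\rangle = \frac{a\cdot b}{[a|b]}$, it gets
$(L_a * L_b) * L_c = \frac{a\cdot b}{[a|b]}\cdot\frac{[a|b]\cdot c}{[a|b|c]}\,L_{[a|b|c]} = \frac{a\cdot b\cdot c}{[a|b|c]}\,L_{[a|b|c]}$,
a form manifestly symmetric in $a,b,c$, so associativity (indeed full symmetry of the triple product) follows at once from associativity of $[\ |\ ]$. You instead keep the coefficient as $\gcd(a,b)\,\gcd(\mathrm{lcm}(a,b),c)$ and verify its symmetry prime by prime, reducing to the valuation identity that each side is the sum of the two smallest of $\alpha,\beta,\gamma$ --- which is a correct and complete argument (your case check goes through), just more laborious: the paper's use of $\gcd(a,b)\cdot\mathrm{lcm}(a,b)=ab$ is exactly the global shortcut that makes the prime-localization unnecessary. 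Both arguments land on the same coefficient, since $\frac{abc}{[a|b|c]}$ has $p$-adic valuation $\alpha+\beta+\gamma-\max(\alpha,\beta,\gamma)$. Your treatment of 1), 3), the primed axioms, and distributivity as formal consequences of the definitions matches what the paper leaves implicit (it only writes out item 2), so on those points you are, if anything, more explicit than the source.
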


\begin{proof}Let us show item 2):
Recall that $\langle a|b\rangle = {\frac{a\cdot b}{[a|b]}}$.
Using (\ref{eq:LL}) we get
$(L_a * L_b) * L_c = {\frac{a\cdot b}{[a|b]}  }\ L_{[a|b]} * L_c =
{\frac{a\cdot b}{[a|b]}}\cdot {\frac{[a|b]\cdot c}{[a|b|c]}}\ L_{[a|b|c]}=
{\frac {a\cdot b \cdot c}{[a|b|c]}}\ L_{[a|b|c]}=
\langle a|b|c \rangle\ L_{[a|b|c]}$.
Since $[\ |\ ]$ is an associative operation, property (2) holds.
\end{proof}

The definition in Eq. \ref{eq:LL} is chosen to
make the following look simple.

\begin{proposition} 
\label{co:1} 
The evolution digraph of a disjoint union of two graphs is
\begin{equation}
\label{eq:GGG}
            (G_1+G_2)^* = \{T(G_1)*T(G_2), \; L(G_1)*L(G_2)\}     
\end{equation}
where the star operations are defined in (\ref{eq:LL}) for loops, 
and (\ref{eq:LL}) for the characteristic trees.
\end{proposition}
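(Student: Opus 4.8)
The plan is to reduce the whole statement to the single observation that a disjoint union makes the harmonic operator block-diagonal. Since $G_1+G_2$ has no edges joining the two components, the boundary and co-boundary operators do not mix them, so the state space splits as $\VVV_{G_1+G_2}=\VVV_{G_1}\oplus\VVV_{G_2}$ and, in the natural basis, $a_{G_1+G_2}=a_{G_1}\oplus a_{G_2}$. Hence $a^t=a_1^t\oplus a_2^t$ for every $t$, and every construction of the previous sections --- the monoid $M(a)$, the tail $T$, the loop group $L(a)$, the projection $\pi$ --- can be read off blockwise.

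First I would settle the tree factor. The characteristic tree $T(G)$ is the digraph $(T(G),\looparrowright)$ carried by the generalized $0$-eigenspace, which for a direct sum is $T(G_1)\oplus T(G_2)$. On it the succession relation factors: $(x_1,x_2)\looparrowright(y_1,y_2)$ iff $y_1=a_1x_1$ and $y_2=a_2x_2$, i.e. iff $x_1\looparrowright y_1$ and $x_2\looparrowright y_2$. This is exactly the direct product of digraphs used in Theorem \ref{th:3}, so $T(G_1+G_2)=T(G_1)*T(G_2)$. Equivalently, since $\mathrm{Ker}\,a^j=\mathrm{Ker}\,a_1^j\oplus\mathrm{Ker}\,a_2^j$, the linear formula (\ref{eq:nKK}) gives $n_j(G_1+G_2)=n_j(G_1)+n_j(G_2)$, which is precisely the exponent addition produced by the free monoid product on ${\cal T}$.

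Next I would treat the loop factor. The loop ensemble $L(G)$ is the eventual image on which $a$ acts invertibly, and for the direct sum it is $L(G_1)\oplus L(G_2)$ with $a$ acting as $a_1\oplus a_2$. If $v_1$ lies on a loop of length $\ell_1$ and $v_2$ on a loop of length $\ell_2$, then $a^t(v_1,v_2)=(v_1,v_2)$ iff $\ell_1\mid t$ and $\ell_2\mid t$, so $(v_1,v_2)$ lies on a loop of length $[\ell_1|\ell_2]$ (their least common multiple). The $\ell_1\ell_2$ points coming from one loop of each type therefore break into orbits all of this length, giving $\ell_1\ell_2/[\ell_1|\ell_2]=\langle\ell_1|\ell_2\rangle$ loops. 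This is exactly the rule (\ref{eq:LL}), $L_{\ell_1}*L_{\ell_2}=\langle\ell_1|\ell_2\rangle\,L_{[\ell_1|\ell_2]}$; summing over all pairs of loops in the decompositions (\ref{eq:LnL}) of $L(G_1)$ and $L(G_2)$ and invoking distributivity (property 4) of the loop algebra yields $L(G_1+G_2)=L(G_1)*L(G_2)$.

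The hard part is not conceptual but a matter of care: the splitting $\VVV_G=T(G)\oplus L(G)$ was defined through $\pi=a^p$ with $p=|L(a)|\cdot|T(a)|$, and this exponent for $G_1+G_2$ is not the one for either summand (here $|T|$ becomes a maximum and $|L|$ a least common multiple). One must therefore check that $T$ and $L$ really are blockwise. The clean way around this is to use the intrinsic, $p$-free descriptions $T(G)=\bigcup_t\mathrm{Ker}\,a^t$ and $L(G)=\bigcap_t\mathrm{Im}\,a^t$, both manifestly preserved by direct sums; once this is in hand, the two factorizations follow from the blockwise action together with the elementary orbit count above.
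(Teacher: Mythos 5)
The paper offers no proof of this proposition at all --- it is stated as a designed consequence of the product (\ref{eq:LL}) (``the definition \ldots is chosen to make the following look simple'') --- so there is no argument of the author's to compare yours against, only the evident intent. Your proof is correct and fills the gap, and it is surely the intended argument: block-diagonality $a=a_1\oplus a_2$ over a disjoint union; the identification of the succession relation on $T(G_1)\oplus T(G_2)$ with the digraph product $*$ used in Theorem \ref{th:3}; and the orbit count $\ell_1\ell_2/[\ell_1|\ell_2]=\langle \ell_1|\ell_2\rangle$ for a pair of loops, extended to the whole loop ensembles by distributivity. The point you flag in your last paragraph is the one real subtlety: the exponent $p$ in Lemma \ref{le:1} is not the same for $G_1+G_2$ as for the summands, so one must know that the splitting $\VVV=T\oplus L$ is blockwise; your exponent-free descriptions $T(G)=\bigcup_t{\rm Ker}\,a^t$ and $L(G)=\bigcap_t{\rm Im}\,a^t$ handle this cleanly (alternatively, one checks directly that the common exponent $p=|L(a)|\cdot|T(a)|$ of the union still yields the idempotent of each $L(a_i)$). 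One convention worth stating explicitly: for the digraph product to produce a tree whose height is the maximum of the two heights, the root must carry the succession $0\looparrowright 0$, which it does since $a0=0$; this is tacitly used whenever $T(G_1)$ and $T(G_2)$ have different heights, and it is consistent with the paper's remark after Figure \ref{fig:TTT}.
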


\section{Power Graphs} 

\begin{definition}
The $p^{\hbox{th}}$ {\it power} of a graph $G$ is the graph $G^p$ 
determined by the $p^{\hbox{th}}$ power of the harmonic matrix of graph $G$, $a^p$.
\end{definition}

Clearly, $G^p$ has the same set of vertices as $G$, but a different 
structure of edges.
That the powers of a graph are well-defined is obvious, since  
$a^p\Omega=a^{p-1}(a\Omega)=0$ and $(a^p)^T=(a^T)^p= a^p$.
An interesting question is to determine the evolution digraph 
of graph $G^p$ from the the evolution digraph of $G$ alone.
\\

From the basic properties of the monoid $M(a)$ we obtain
the length of the tail and rank of the cycle group of $M(a^p)$ to be
\begin{equation}
\label{eq:tail-loop}
\begin{array}{rl}
\hbox{tail:~~} &\quad |T(a^p)| = \hbox{Ent\,}\bigl( |T(a)|/q\bigr)  \\
\hbox{loop:~~} &\quad |L(a^p)| = |L(a)|\;\fd\; q                    \; ,
\end{array}
\end{equation}
where $\fd$ denotes ``funny division,'' defined for integers by
$a\fd b = [a|b]/b$ ~ (recall that $[a|b]=\hbox{lcm\,}(a,b)$).
For instance\
$6\fd 1=6$, \ $6\fd 2=3$, \ $6\fd 3=2$, \ $6\fd 4=3$, \
$6\fd 5=6$, \ $6\fd 6=1$.

\begin{proposition}
\label{pr:power}
Let the evolution digraph $G^*$ of graph $G$ have the following 
characteristic tree and the loop ensemble:
\begin{equation*}
\label{eq:propTL1}
\begin{array}{rl}
T(G)&= I_1^{b_1}*I_2^{b_2}*\ldots *I_k^{b_k}=
                        \prod\limits_{i=1}^k \ I_i^{b_i}\cr
L(G)&= c_1L_1+c_2L_2+ \ldots +c_m L_m=
                        \sum\limits_{i=1}^m \ c_iL_i  \ ,
\end{array} 
\end{equation*}
where $k=|T(a)|$ and $m=|L(a)|$.
Then the evolution digraph $(G^q)^*$ consists of
\begin{equation}
\label{eq:propTL2}
  \begin{array}{cl}
     T(G^q) &= \prod\limits_{i=1}^{k'}\ I_i^{w_i} 
               \qquad\hbox{where}\quad
                w_i=\sum\limits_{j=q(i-1)+1}^{q\cdot i} \ j\cdot b_j \\
     L(G^q) &= \sum\limits_{i=1}^{m'} \ c_i\cdot\langle i|q\rangle\ L_{i\fd q}
\end{array}
\end{equation}
where $k'=|T(a^p|$ and $m'=|T(a^p)|$ are those of (\ref{eq:tail-loop}) above.
\end{proposition}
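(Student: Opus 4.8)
The plan is to reduce both halves of the claim to facts already established for a single harmonic operator, now applied to $a^q$, the harmonic operator of $G^q$. The starting observation is that the canonical splitting $\VVV_G=T(G)\oplus L(G)$ of Proposition \ref{pr:2} is untouched by passing from $a$ to $a^q$. Indeed $a^q$ commutes with $a$, hence preserves both summands; on $T(G)$ it is still nilpotent, and on $L(G)$ it is still an automorphism, being a power of the automorphism $a|_{L(G)}$. In Fitting terms, $\mathrm{Ker}\,(a^q)^N=\mathrm{Ker}\,a^{qN}$ and $\mathrm{Im}\,(a^q)^N=\mathrm{Im}\,a^{qN}$ stabilize to the same stable kernel and stable image for large $N$, so $T(G^q)=T(G)$ and $L(G^q)=L(G)$ as subspaces. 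Thus only the dynamics, not the underlying spaces, change, and the tree and loop parts can be treated independently.

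For the loop ensemble I would argue purely on orbits. On $L(G)$ the generator $a$ permutes the states, and each loop $L_i$ is a single cyclic orbit on which $a$ acts as an $i$-cycle. Raising an $i$-cycle to the $q$-th power is the standard cycle-power computation: it splits into $\gcd(i,q)$ cycles, each of length $i/\gcd(i,q)$. In the paper's notation $\gcd(i,q)=\langle i|q\rangle$ and $i/\gcd(i,q)=[i|q]/q=i\fd q$, so each original loop contributes $\langle i|q\rangle$ loops of length $i\fd q$ under $a^q$. Summing over the $c_i$ copies of $L_i$ yields $L(G^q)=\sum_i c_i\langle i|q\rangle L_{i\fd q}$, and the bookkeeping that the new lengths still divide $|L(a^q)|=|L(a)|\fd q$ follows from Proposition \ref{pr:5} applied to $a^q$, consistently with (\ref{eq:tail-loop}).

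For the characteristic tree I would apply Theorem \ref{th:4} to $a^q$ directly. That theorem reads multiplicities off kernel dimensions of powers; for $a^q$ the relevant powers are $(a^q)^i=a^{qi}$, so the multiplicity of $I_i$ is $w_i=2\dim\mathrm{Ker}\,a^{qi}-\dim\mathrm{Ker}\,a^{q(i+1)}-\dim\mathrm{Ker}\,a^{q(i-1)}$. Substituting the original relations (\ref{eq:KKK}) in the form $\dim\mathrm{Ker}\,a^{m}=\sum_j\min(m,j)\,b_j$ turns $w_i$ into a fixed integer combination of the $b_j$. Equivalently and more transparently, I would use the Jordan picture of Theorem \ref{th:3}: a single nilpotent block of size $j$ (the tree $I_j$), raised to the power $q$, breaks into $r$ blocks of size $s+1$ and $q-r$ blocks of size $s$, where $j=qs+r$ with $0\le r<q$; assembling these contributions over all blocks, weighted by $b_j$, produces the weights $w_i$ and the truncation at $k'=\mathrm{Ent}(|T(a)|/q)$. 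The two routes agree, the second exhibiting the block splitting and the first reducing it to a one-line consequence of an already-proved theorem.

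The routine parts are the setup and the loop ensemble, where cycle-power decomposition does essentially all the work. The main obstacle is the tree part: I expect the genuine labor to lie in collapsing the alternating kernel-dimension combination (equivalently, summing the block-splitting contributions) into closed form for $w_i$, the delicate points being the boundary blocks where $q\nmid j$ and the small blocks with $j\le q$ that collapse to height-one trees or fold entirely into $\mathrm{Ker}\,a^q$. Once that piecewise-linear simplification is carried out and matched against $k'$, assembling the semidirect product $\{T(G^q),L(G^q)\}$ as in Proposition \ref{pr:2} completes the proof.
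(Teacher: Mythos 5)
Your approach is genuinely different from the paper's: the paper's proof is a two-sentence combinatorial sketch (a state is a ``glowing point'' on each loop and each tree, $a$ shifts every point one step, and one traces the $q$-step jumps), whereas you work linearly via the Fitting decomposition, cycle powers, and Jordan block powers. Your observation that $T(G^q)=T(G)$ and $L(G^q)=L(G)$ as subspaces is correct and worth making explicit, and your loop half is complete and correct: the $q$-th power of an $i$-cycle splits into $\langle i|q\rangle$ cycles of length $i\fd q$, which is exactly the stated formula and is a rigorous version of what the paper gestures at.

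The gap is in the tree half, precisely at the ``piecewise-linear simplification'' you defer. If you carry it out, your two routes do agree with each other but \emph{not} with the formula in the statement. A nilpotent block of size $j=qs+r$ (with $0\le r<q$) raised to the $q$-th power splits into $r$ blocks of size $s+1$ and $q-r$ blocks of size $s$; summing over blocks (equivalently, expanding $2\,\mathrm{dim}\,\mathrm{Ker}\,a^{qi}-\mathrm{dim}\,\mathrm{Ker}\,a^{q(i+1)}-\mathrm{dim}\,\mathrm{Ker}\,a^{q(i-1)}$ using $\mathrm{dim}\,\mathrm{Ker}\,a^{m}=\sum_j\min(m,j)\,b_j$) gives
$$
w_i=\sum_{j=q(i-1)+1}^{qi}\bigl(j-q(i-1)\bigr)\,b_j+\sum_{j=qi+1}^{q(i+1)}\bigl(q(i+1)-j\bigr)\,b_j ,
$$
not $\sum_{j=q(i-1)+1}^{qi} j\,b_j$. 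The discrepancy is real, not an algebra slip on your side: the stated formula violates the necessary node count $\sum_i i\,w_i=\mathrm{dim}\,T(G)=\sum_j j\,b_j$ as soon as some block has size $j>q$ (for instance a single $I_3$ with $q=2$ must become $I_2*I_1$, of dimension $3$, whereas the stated formula gives $w_2=3$, i.e.\ $I_2^3$ of dimension $6$; the stated formula is correct only in the degenerate case $q\ge|T(a)|$, where everything collapses to $I_1$'s). So your method is the right one, but the assertion that it ``produces the weights $w_i$'' of the proposition is exactly the step that fails; finishing the computation you set up would have revealed that the proposition's tree formula cannot be recovered and needs to be replaced by the expression above.
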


\begin{proof}
The proof of this is simpler in combinatorial terms.
Express $G^*$ as the product of the loop ensemble and the characteristic tree.
A state $S$ is equivalent to a choice of a node in one of the loops in
$L(G)$, and a node in each binary tree constituting $T(G)$ 
(imagine them as ``glowing points'').
The action of the harmonic operator on $S$ is represented now
by a shift of each glowing point by one position:
along the loop, and down each tree.
Tracing their $q$-step movements brings the conclusion to the proof.
\end{proof}

{\bf Remark:} Every integer $q\in\mathbf{N}$ defines a linear map on 
the universal loop space 
$
     {\widehat q}:\ {\cal L} \ \longrightarrow \ {\cal L}              
$
which transforms each single loop $L_a$ into a sum of loops
according to
$$
               L_a \ \longrightarrow \ \langle a|q\rangle\; L_{a\fd q}      
$$
as suggested by (\ref{eq:propTL2}).
For instance $\widehat 4 (L_6) = 2L_3$, which is due to the {\it two} distinct 
paths of ``4-step jumps" in a hexagon.
The second part of Proposition \ref{pr:power} may be now expressed simply
$$ 
                       L(G^q) = \widehat q {L(G)}               
$$
Notice that operator $\widehat{\ }$ is a representation of natural 
numbers with multipliation, namely 
$\widehat a \widehat b = \widehat{a\cdot b}$.
On the other hand, we have the {\it adjoint} representation of the 
algebra $\{{\cal L},*\}$, where for $a\in\N$ we define   \
$
{\mathop a\limits^*} \; L_i = L_a * L_i
$.
Each of these representations is of course commutative, but the common algebra
of both kinds of operators is not.
For instance:
%
\def\dwas{\mathop 2\limits^*}   
\def\dwah{\mathop 2\limits^{\wedge}} 
$$
\dwas\dwah L_i =\left\{\begin{array}{ll}
                     L_{2i}  & \hbox{if } i= \hbox{ odd} \cr
                     2L_i    & \hbox{if } 2|i \hbox{ but } 4\!\not|i \cr
                     4L_{i/2}& \hbox{if } 4|i 
                  \end{array}\right.
\quad \hbox{whereas}\quad
\dwah\dwas L_i =\left\{ \begin{array}{ll}
                     2L_i    & \hbox{if }i= \hbox{ odd }\cr
                     4L_{i/2}& \hbox{if }i= \hbox{ even} 
                   \end{array}\right. 
$$

\section{Conclusion} 

We defined the harmonic operator of a graph as a topologically 
motivated endomorphism of the space of states of graphs.  The states 
(co-states) are meant as vectors in the linear space over $\mathbf{Z}$ 
formally spanned by the vertices (edges).  The paper concentrates on the former.
We defined a dynamical system on a graph as a recursive application 
of the harmonic operator.    
We found the types of evolutions (evolution digraphs) 
and classified them.  
An algorithm to foretell the evolution digraph for a particular graph 
has consists of Theorems \ref{th:4} and \ref{th:6}.
\\

We see it as a map
\begin{equation}
\label{eq:GLT2}
{\cal G} \  \longrightarrow \ {\cal L}\times {\cal T}         
\end{equation}
which associates to each graph $G$ two vectors, $L(G)\in{\cal L}$
and $T(G)\in{\cal T}$.
Trees and loops correspond to structure of invariant subspaces of $a$;
any number derived from it may be viewed as an invariant of the
graph.
\\

Map (\ref{eq:GLT2}) is a morphism of objects, and the correspondence for
operations of the disjoint union and for the graph power were shown.
The problem of finding such a correspondence for other operations on graphs
may prove to be rather difficult.
\\

The inverse problem asks whether a given digraph represents a 
harmonic evolution of some graph.
Equivalently, the problem is to characterize the images
 $L({\cal G})\subset{\cal L}$ and $T({\cal G})\subset{\cal T}$.
We offer these two conjectures.

\bigskip
\noindent
{\bf Conjecture 1:} \ {\sl
Any finite product of binary trees of height at least 1 is a
characteristic tree for some graph.
}

\bigskip
{\bf Conjecture 2:} \ {\sl
Denote ${\cal H} = \{2^i\;|\;i\in{\bf N}\}=\{1,2,4,8,\ldots\}$ 
(the set of ``highly even numbers"). 
A formal sum $L=\sum n_iL_i$ is said to be 
{\rm admissible}, 
$L\in {\cal L}_{A}$, if the following is satisfied:
\begin{eqnarray*}
       1)&& n_1\in{\cal H}  \cr
       2)&& \sum_{i|p} i\cdot n_i \ \in\ {\cal H} \hbox{~~for each~~} p
                                         \hbox{~~such that~~} n_p\not= 0 \cr
       3)&& \hbox{if~~} n_i\not=0 \hbox{~~or~~} n_j\not=0
                                       \hbox{, ~~then~~} n_{[i|j]}\not=0 \cr
       4)&& \log_2\sum_{i} i\cdot n_i \hbox{~~~is even} \cr
\end{eqnarray*}
where we denote $[i|j]=\hbox{\rm l.c.m.}\, (i,j)$.
Then the set of harmonic loop ensembles of finite graphs 
coincides with the above admissible set, ${\cal L}({\cal G})={\cal L}_{A}$.
}
\\

Further exercises and problems:  1. rewrite the theory for 
the co-harmonic operator $b=\delta\partial$ acting on the edges and 
defining the harmonic "co-evolution" on graphs.
Can one determine the corresponding co-evolution digraph from the 
evolution graph alone? 
2. Replace the field $\mathbf{Z}_2$  
by another commutative ring (like $\mathbf{Z}_3$).
3. Classify the rules for evolution on graphs other than harmonic rules.

\vskip0.2in

\noindent
{\bf {\large Acknowledgements}}
\\
\\
The author would like to thank Philip Feinsilver and Greg Budzban
for fruitfull conversations and for their interest in this project.


\end{document}